\newcommand{\eps}{\varepsilon}
\renewcommand{\leq}{\leqslant}
\renewcommand{\geq}{\geqslant}
\newcommand{\R}{\mathbb{R}}
\newcommand{\A}{\mathfrak{A}}
\newcommand{\eq}[1]{\begin{equation}{#1}\end{equation}}
\newcommand{\mlt}[1]{\begin{multline}{#1}\end{multline}}
\newcommand{\alg}[1]{\begin{align}{#1}\end{align}}
\newcommand{\set}[2]{\{{#1}\mid{#2}\}}
\newcommand{\Set}[2]{\Big\{{#1}\,\Big|\;{#2}\Big\}}
\newcommand{\twodots}{\,..\,}
\newcommand{\Z}{\mathbb{Z}}
\newcommand{\D}{\mathbb{D}}
\newtheorem{Le}{Lemma}
\newtheorem{St}[Le]{Proposition}
\numberwithin{equation}{section}
\begin{document}

\title{On embedding of Besov spaces of zero smoothness into Lorentz spaces}

\author{Dmitriy Stolyarov\thanks{Supported by the Russian Science Foundation grant 19-71-10023.}}

\maketitle

\medskip 

\centerline{\emph{Dedicated to professor Besov on the occasion of his anniversary}}

\medskip

\begin{abstract}
We show that the zero smoothness Besov space~$B_{p,q}^{0,1}$ does not embed into the Lorentz space~$L_{p,q}$ unless~$p=q$; here~$p,q\in (1,\infty)$. This answers negatively a question proposed by O. V. Besov.
\end{abstract}

\section{Besov spaces of zero smoothness}\label{S1}
Besov spaces play an important role in analysis, PDE, approximation theory, and other parts of mathematics. At first glance they may be thought of as refinements of Sobolev spaces. The Besov spaces were introduced in~\cite{Besov1961} and now their description may be found in many textbooks. See, e.g.,~\cite{BIN1978},~\cite{Grafakos2009Modern}, or~\cite{Peetre1976}. In recent years there is a constant interest in generalization and specification of this already fine scale. For Besov--Lorentz spaces, see~\cite{HSR2022},~\cite{Peetre1976},~\cite{SeegerTrebels2019}, and~\cite{Stolyarov2022}. For so-called logarithmic Besov spaces, see~\cite{CGO2008}, \cite{CGO2011}, \cite{CobosDominguez2015bis},~\cite{CobosDominguez2015}, and~\cite{DominguezTikhonov2018}. The latter class of spaces is often named as Besov spaces of zero smoothness. For Besov spaces of generalized smoothness, see, e.g.,~\cite{Goldman1987},~\cite{Goldman2007},~\cite{Netrusov1987}, the literature on this topic is vast. Yet another version of this notion was suggested by O. V. Besov in~\cite{Besov2012} and~\cite{Besov2014}. The definition reads as follows. Let~$p,q,r \in [1,\infty)$ be three parameters and let~$f$ be a locally summable function on~$\R^d$. Define the seminorm
\eq{\label{BesovNorm}
\|f\|_{B_{p,q}^{0,r}} = \Big(\int\limits_0^\infty \Big(\int\limits_{\R^d}\big(\delta_r [f;h](x)\big)^p\,dx\Big)^{\frac{q}{p}}\,\frac{dh}{h}\Big)^{\frac1q},
}
where~$\delta_r [f;h]\colon \R^d\to \R_+$ is the 'averaged modulus of smoothness',
\eq{
\delta_r[f,h](x) = \Big(\fint\limits_{[-h,h]^d}\fint\limits_{[-h,h]^d}|f(x+y) - f(x+z)|^r\,dy\,dz\Big)^{\frac1r}.
}
It was proved in~\cite{Besov2014} that~$\delta_r[f;h]$ is equivalent to
\eq{\label{equivalentdelta}
\Big(\fint\limits_{[-h,h]^d}\Big|f(x+y) - \fint\limits_{[-h,h]^d}f(x+z)\,dz\Big|^r\,dy\Big)^{1/r}.
} 
Let~$B_{p,q}^{0,r}$ be the completion of the set of smooth compactly supported functions in the corresponding seminorms. The spaces~$B_{p,q}^{0,r}$ were used in~\cite{Besov2012} and~\cite{Besov2014} obtain sharp embedding theorems. Note that the definitions in those two papers differ from each other a little bit, and our definition is also slightly different. These variations do not make sense for the questions we are going to investigate.

Recall the definition of the Lorentz quasi-norm:
\eq{\label{LorentzDef}
\|f\|_{L_{p,q}} = p^{\frac{1}{q}} \Big\|t |\set{x\in \R^d}{|f(x)| \geq t}|^{\frac{1}{p}}\Big\|_{L_q(\R_+,\frac{dt}{t})},
}
where the absolute value of a set is its Lebesgue measure. In~\cite{Besov2014}, O. V. Besov asked whether the space~$B_{p,q}^{0,1}$ embeds continuously into the Lorentz space~$L_{p,q}$. He mentioned that A. I. Tyulenev proved the said embedding in the case~$1< p=q < \infty$. We are ready to state our main results. From now on we limit our considerations to the case only~$r=1$; the study of other cases is also interesting. We will consider the cases~$q < p$ and~$p < q$ separately since the reasonings are different in these cases.
\begin{St}\label{Prop1}
Let~$1 \leq q < p <\infty$. Then the space~$B_{p,q}^{0,1}$ does not embed continuously into~$L_{p,q}$.
\end{St}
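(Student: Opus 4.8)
\medskip

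\emph{Proof proposal.} The plan is to construct, for each integer $M\ge2$, a function $f_M$ on $\R^d$ with $\|f_M\|_{L_{p,q}}\asymp(\log M)^{1/q}$ and $\|f_M\|_{B_{p,q}^{0,1}}\lesssim(\log M)^{1/p}$; since $q<p$, the ratio $\|f_M\|_{L_{p,q}}/\|f_M\|_{B_{p,q}^{0,1}}\gtrsim(\log M)^{1/q-1/p}$ then tends to infinity, so no continuous embedding is possible. The heuristic is that $\delta_1[f;h]$ is an $L^1$-average of $f$ over a cube of sidelength $h$, so it hardly sees oscillations of $f$ spread thinly over space, whereas $L_{p,q}$ with $q<p$ is large precisely when $f$ takes many comparable values on sets of correspondingly many different sizes. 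Thus let $Q_1,\dots,Q_M\subset\R^d$ be unit cubes whose pairwise distances are all at least $S=S(M)$ (to be taken a large power of $M$), put $c_j=j^{-1/p}$, and work with $f_M=\sum_{j=1}^{M}c_j\mathbf 1_{Q_j}$.

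For the Lorentz norm, $|\{\,|f_M|\ge t\,\}|=\#\{j\le M:c_j\ge t\}\asymp\min(M,t^{-p})$ for $0<t<1$, and this set is empty for $t\ge1$; inserting this into \eqref{LorentzDef} and splitting the integral at $t\asymp M^{-1/p}$ gives $\|f_M\|_{L_{p,q}}\asymp(\log M)^{1/q}$, the bulk coming from the range $M^{-1/p}\lesssim t<1$ on which $t^q|\{\,|f_M|\ge t\,\}|^{q/p}\asymp1$. (The same computation with the Lebesgue exponent gives $\|f_M\|_{L_p}\asymp(\log M)^{1/p}$, consistent with the bound below.)

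For the Besov seminorm, first record the single-bump estimate. A direct evaluation of $\|\delta_1[\mathbf 1_Q;h]\|_{L_p}^p$ for a cube $Q$ of side $\ell$ shows it is $\asymp h\,\ell^{d-1}$ for $0<h\le\ell$ (there $\delta_1[\mathbf 1_Q;h]\asymp1$ on a collar of width $\asymp h$ about $\partial Q$ and vanishes elsewhere) and $\asymp\ell^{dp}h^{d(1-p)}$ for $h\ge\ell$ (there $\delta_1[\mathbf 1_Q;h]\asymp\ell^d h^{-d}$ on a set of measure $\asymp h^d$); since $p>1$, both regimes contribute $\asymp\ell^{dq/p}$ to $\int_0^\infty\|\delta_1[\mathbf 1_Q;h]\|_{L_p}^q\,\frac{dh}{h}=\|\mathbf 1_Q\|_{B_{p,q}^{0,1}}^q$, so $\|c\,\mathbf 1_Q\|_{B_{p,q}^{0,1}}\asymp c\,|Q|^{1/p}$. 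Now split $\|f_M\|_{B_{p,q}^{0,1}}^q=\int_0^\infty\|\delta_1[f_M;h]\|_{L_p}^q\,\frac{dh}{h}$ at $h\asymp S$. For $h\lesssim S$ the cube $x+[-h,h]^d$ meets at most one $Q_j$, so $\delta_1[f_M;h]=\sum_j\delta_1[c_j\mathbf 1_{Q_j};h]$ with pairwise disjoint supports and $\|\delta_1[f_M;h]\|_{L_p}^p=\big(\sum_j c_j^p\big)\,\|\delta_1[\mathbf 1_{Q_1};h]\|_{L_p}^p$; integrating in $h$ and using $\sum_{j\le M}c_j^p=\sum_{j\le M}j^{-1}\asymp\log M$ gives the main contribution $\asymp(\log M)^{q/p}$. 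For $h\gtrsim S$ I would use the sublinearity $\delta_1[f_M;h]\le\sum_j c_j\,\delta_1[\mathbf 1_{Q_j};h]$, the decay $\|\delta_1[\mathbf 1_{Q_j};h]\|_\infty\asymp h^{-d}$ together with its support being within distance $\asymp h$ of $Q_j$, the crude bound $\sum_{j:\,\dist(x,Q_j)\lesssim h}c_j\lesssim\big(\min(M,(h/S)^d)\big)^{1-1/p}$, and the fact that $Q_1,\dots,Q_M$ lie in a set of diameter $\asymp M^{1/d}S$, to bound the remaining part of the integral by $\lesssim M^{q/p}S^{-d(p-1)q/p}$. Choosing $S=M^{2/(d(p-1))}$ makes this $M^{-q/p}=o\big((\log M)^{q/p}\big)$, whence $\|f_M\|_{B_{p,q}^{0,1}}\asymp(\log M)^{1/p}$; combined with the Lorentz estimate, this proves the proposition.

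The step I expect to be the real work is this last estimate, at the interaction scales $h\gtrsim S$, where many bumps sit inside a single averaging cube: one has to check that because the heights $c_j$ decay and the bumps are spread over a region of diameter $\asymp M^{1/d}S$, the averaged modulus there is small enough that after raising to the power $q$ and integrating over the logarithmically long range $S\lesssim h\lesssim M^{1/d}S$ and the tail $h\gtrsim M^{1/d}S$, the total stays below $(\log M)^{q/p}$; this is what dictates the polynomial size of $S$, which is harmless. One final routine point: $\sum_j c_j\mathbf 1_{Q_j}$ is only a simple function, but replacing each $\mathbf 1_{Q_j}$ by a fixed smooth bump equal to $1$ on $Q_j$ and supported in $2Q_j$ leaves every estimate above unchanged up to constants and makes $f_M$ an admissible element of $B_{p,q}^{0,1}$; alternatively, one argues by density.
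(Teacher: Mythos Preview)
Your argument is correct and follows the same core idea as the paper: build a function as a sum of disjointly supported bumps with heights $c_j\asymp j^{-1/p}$, so that the Lorentz norm picks up the $\ell_{p,q}$ norm of the heights (of order $(\log M)^{1/q}$) while the Besov seminorm only sees the $\ell_p$ norm ($\asymp(\log M)^{1/p}$). Your large-scale estimate with the rearrangement bound $\sum_{j\in A}c_j\le\sum_{j\le|A|}c_j\asymp|A|^{1-1/p}$ and the choice $S=M^{2/(d(p-1))}$ is sound; the intermediate range $S\lesssim h\lesssim M^{1/d}S$ indeed contributes $\lesssim M^{q/p}S^{d(1-p)q/p}=M^{-q/p}$, and the tail $h\gtrsim M^{1/d}S$ contributes even less. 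The smoothing remark at the end is harmless.

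The paper's implementation differs in two cosmetic but instructive ways. First, instead of unit bumps placed at mutual distance $S\gg1$, the paper uses tiny bumps of sidelength $2^{-n}$ placed at the centers of the $2^{dN}$ dyadic subcubes of $[0,1]^d$; these two pictures are dilates of one another (your $M$ is the paper's $2^{dN}$, your $S$ is the paper's $2^{n-N}$). Second, the paper passes through the discretized seminorm of Lemma~\ref{equivalentnorm} and splits the sum over dyadic generations into four ranges, whereas you work directly with the integral in $h$. Your route is slightly more elementary---no auxiliary lemma, no $\varlimsup_{n\to\infty}$ step to kill the residual intermediate scales---while the paper's discretized setup is reused verbatim in the proof of Proposition~\ref{Prop2}, so each choice has its advantages.
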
 
\begin{St}\label{Prop2}
Let~$1 < p < q <\infty$. Then the space~$B_{p,q}^{0,1}$ does not embed continuously into~$L_{p,q}$.
\end{St}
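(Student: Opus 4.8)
\section*{Proof proposal for Proposition \ref{Prop2}}

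\emph{Strategy and reduction.} Since $B_{p,q}^{0,1}$ is by definition the completion of smooth compactly supported functions in $\|\cdot\|_{B_{p,q}^{0,1}}$, it suffices to produce, for every $C>0$, a smooth compactly supported $f$ with $\|f\|_{L_{p,q}}\ge C\|f\|_{B_{p,q}^{0,1}}$; equivalently, a sequence $(f_n)$ of such functions with $\|f_n\|_{B_{p,q}^{0,1}}$ bounded (or growing more slowly than a suitable power of $n$) while $\|f_n\|_{L_{p,q}}\to\infty$. The heuristic that guides the construction is the mismatch between the two quantities when $p\ne q$: $\|f\|_{B_{p,q}^{0,1}}$ first records, at each window size $h$, the \emph{spatial $L^p$-size} $\|\delta_1[f;h]\|_{L^p}$ of the local mean oscillation (recall the equivalent form \eqref{equivalentdelta}) and then takes the $L^q(\tfrac{dh}{h})$-norm over scales, whereas $\|f\|_{L_{p,q}}$ first records, at each level $t$, the quantity $t|\{|f|>t\}|^{1/p}$ and then takes the $L^q(\tfrac{dt}{t})$-norm over levels, see \eqref{LorentzDef}. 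For the ``diagonal'' profile $|x|^{-d/p}$ (truncated at height $2^{N/p}$ and cut off outside the unit ball) scales and levels are in bijection, one finds $\|\delta_1[f;h]\|_{L^p}\asymp1$ for $2^{-N}\lesssim h\lesssim1$, and both norms are comparable to $N^{1/q}$; so the embedding cannot be violated by such radial profiles. The point is to break this bijection so that, measured in the Besov bookkeeping, the level sets of $f$ pile up ``super\-efficiently''. Note the ratio $\|f\|_{L_{p,q}}/\|f\|_{B_{p,q}^{0,1}}$ is scaling invariant (both norms scale like $\lambda^{d/p}$ under $f\mapsto f(\cdot/\lambda)$), so no cheap dilation argument is available and a genuinely multi-scale gadget is needed.

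\emph{The construction.} I would take $f_n$ of the form
\[
f_n \;=\; \sum_{k=1}^{n} a_k\,\Phi_k,
\]
where $\Phi_k$ is a system of smooth bumps living at a \emph{lacunary} spatial scale $2^{-m_k}$ with $m_1<\dots<m_n$ chosen so that consecutive scales are very far apart, the amplitudes $a_k$ are tuned so that the distribution function of $f_n$ traces out the critical decay $|\{|f_n|>t\}|\asymp t^{-p}$ along a range of $t$ of logarithmic length $\asymp n$ (forcing $\|f_n\|_{L_{p,q}}\asymp n^{1/q}$, \emph{because} $q>p$), and the bumps are moreover placed so as to be well separated in space, so that at every window scale $h$ no cube $Q(x,h)$ meets more than a bounded number of them at once. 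The lacunarity in scale together with the spatial spreading is what should prevent the appearance, in $\|\delta_1[f_n;h]\|_{L^p}$, of a ``plateau'' of order $1$ running over a logarithmically long band of scales (the phenomenon responsible for the radial example being sharp): at a given $h$ only one block $a_k\Phi_k$ is essentially ``in resonance'', and its contribution, plus those of the coarser blocks seen as spikes and the finer ones smeared out, should be arranged to add up to much less than what the diagonal profile produces. One then computes $\|f_n\|_{B_{p,q}^{0,1}}^q=\int_0^\infty\|\delta_1[f_n;h]\|_{L^p}^q\frac{dh}{h}$ and checks it stays $o(n)$, whence $\|f_n\|_{L_{p,q}}/\|f_n\|_{B_{p,q}^{0,1}}\to\infty$. (Working in $\R^d$ one may either do this directly with cubical bumps or fatten a one-dimensional gadget; the estimate $\|\delta_1[\chi_E;h]\|_{L^p}^p\asymp h\cdot\mathrm{Per}(E)$ for small $h$, and its analogue for the multi-scale sums, are the basic bricks.)

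\emph{Main obstacle.} The delicate point is exactly the uniform control of $\|\delta_1[f_n;h]\|_{L^p}$ over all $h$. The very feature that makes $\|f_n\|_{L_{p,q}}$ large --- namely that $f_n$ attains large values on sets of small measure --- tends to force $\delta_1[f_n;h]$ to be of order $1$ in $L^p$ on a logarithmically long interval of scales, which would push $\|f_n\|_{B_{p,q}^{0,1}}$ up to the same order $n^{1/q}$ and destroy the argument; this is what happens for naive ``staircase'' profiles, where the two norms always come out comparable. Overcoming this requires a careful joint choice of the gap sequence $(m_k)$, the amplitudes $(a_k)$, and the spatial layout of the bumps, so that the oscillation actually \emph{registered} at any single window scale is strictly smaller than the diagonal one, while the distribution function is not correspondingly reduced; making this quantitative balancing work, and ruling out the large-scale cross terms (cubes straddling several bump systems), is the heart of the proof.
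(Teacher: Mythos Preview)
Your proposal correctly isolates the need for a multi-scale construction with blocks whose Besov scale-profiles are nearly disjoint, but as written it cannot close, and the obstacle you flag is in fact fatal for your construction. If each $a_k\Phi_k$ is (up to constants) a single bump at scale $2^{-m_k}$ with $\|a_k\Phi_k\|_{L_p}\asymp 1$ --- which your distributional requirement forces --- then at the window $h=2^{-m_k}$ one has $\|\delta_1[f_n;h]\|_{L_p}\gtrsim\|\delta_1[a_k\Phi_k;h]\|_{L_p}\asymp 1$, and this alone contributes $\asymp 1$ to the integral $\int\|\delta_1[f_n;h]\|_{L_p}^q\,\frac{dh}{h}$ over an $O(1)$ logarithmic neighbourhood of $2^{-m_k}$; summing over $k$ gives $\|f_n\|_{B_{p,q}^{0,1}}^q\gtrsim n$, exactly matching your Lorentz lower bound $n^{1/q}$. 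No choice of gaps $(m_k)$ or spatial layout fixes this, because the peak of the scale profile of a single bump is pinned at its own sidelength and has height equal to its $L_p$-norm. Note also that tuning the amplitudes to make the distribution function trace $t^{-p}$ is the wrong direction when $q>p$: that profile is precisely where $\|\cdot\|_{L_{p,q}}$ is \emph{smallest} relative to $\|\cdot\|_{L_p}$, so you are voluntarily giving away any possible gain.

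The paper's proof repairs both points. First, all amplitudes are equal to $1$: the test function is $\Phi=\sum_{k=1}^{T}\Phi_k$ where each $\Phi_k$ is a \emph{building block} $\sum_j\varphi_{Q_j}$ with unit amplitude and $\sum_j|Q_j|=1$, so $\Phi$ is essentially one-level and $\|\Phi\|_{L_{p,q}}\asymp\|\Phi\|_{L_p}\asymp T^{1/p}$. Second --- and this is the idea your proposal is missing --- each block is not one bump but a \emph{cloud} of $2^{dN}$ tiny bumps at scale $2^{-n}$ (with $n\gg N$) placed at the centers of the $2^{dN}$ dyadic $N$-cubes in $[0,1]^d$, then dilated so the bumps have unit total volume. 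This spatial spreading decouples the scale at which the Besov profile concentrates from the $L_p$-mass: one shows (reusing the estimates of Section~\ref{S2}) that the profile $l\mapsto\big(\sum_{Q\in\D_l}|Q|\,\delta[\Phi_k;0.6\ell(Q)](c_Q)^p\big)^{q/p}$ satisfies $\lesssim 2^{-\nu|l-M|}$ for a freely chosen $M$, while $\|\Phi_k\|_{L_p}\asymp 1$. One then invokes an elementary $\ell_q$-almost-orthogonality fact (Lemma~\ref{EllqDisjoint}): if the scale profiles of $f$ and the new block $\Psi$ are concentrated on disjoint ranges of $l$, then $\|f+\Psi\|_*^q\le(1+\varepsilon)(\|f\|_*^q+C)$. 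Iterating $T$ times gives $\|\Phi\|_*\lesssim T^{1/q}\ll T^{1/p}$. In short, the ``careful joint choice'' you allude to cannot be made with single-bump blocks; the spreading-in-space of each block is the mechanism that actually defeats the plateau.
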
 
Theorem~$2$ in~\cite{Besov2012} says that~$B_{p,q}^{0,1}$ embeds continuously into the classical Besov space~$B_{p,q}^0$ (the latter space is defined via Fourier analytical decompositions as in~\cite{Peetre1976}). It is known that~$B_{p,q}^0$ does not embed into~$L_{p,q}$. The propositions above seem to be more difficult than this folklore fact.

The plan of the paper is: we finish this section with a useful discretization of the seminorm~\eqref{BesovNorm} and some heuristics behind the proofs of Propositions~\ref{Prop1} and~\ref{Prop2}; Sections~\ref{S2} and~\ref{S3} contain the proofs of these propositions; the final Section~\ref{S4} is devoted to Tyulenev's reasoning.

We will be considering only cubes with edges parallel to the coordinate axis. For a cube~$Q$, let~$c_Q$ be its center and let~$\ell(Q)$ be its sidelength. Let~$\D_n$ be the set of dyadic cubes of generation~$n$:
\eq{
\D_n = \Set{\prod\limits_{j=1}^d[2^{-n}k_j, 2^{-n}(k_j+1))}{k\in \Z^d}.
}
\begin{Le}\label{equivalentnorm}
The seminorms
\eq{\label{equivalentnormformula}
\|f\|_* =  \Big(\sum\limits_{k\in \Z} \Big(\sum\limits_{Q\in \D_k} |Q| \big(\delta[f;0.6\ell(Q)](c_Q)\big)^p\Big)^{\frac{q}{p}}\Big)^{1/q}
}
and~\eqref{BesovNorm} are equivalent.
\end{Le}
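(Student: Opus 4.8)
The equivalence $\|f\|_* \asymp \|f\|_{B_{p,q}^{0,1}}$ is a standard "continuous-to-discrete" passage for the mixed-norm functional~\eqref{BesovNorm}. I would split it into two observations: (i) the integral over $h\in(0,\infty)$ of the $L^p(dx)$-norm of $\delta_1[f;h]$ can be replaced, up to constants, by the sum over $k\in\Z$ of the same $L^p$-norm evaluated at the single scale $h=2^{-k}$ (or any fixed geometric sequence of scales); and (ii) for a fixed scale $h\asymp \ell(Q)$, the $L^p(\R^d,dx)$-norm of $x\mapsto \delta_1[f;h](x)$ is comparable to the $\ell^p$-sum $\big(\sum_{Q\in\D_k}|Q|\,\delta_1[f;ch](c_Q)^p\big)^{1/p}$ with the center value $\delta_1[f;c\ell(Q)](c_Q)$ standing in for the whole cube. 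The constant $0.6$ is just a convenient choice making $[-0.6\ell(Q),0.6\ell(Q)]^d + c_Q$ cover $Q$ and a fixed neighborhood of it; any fixed constant $>1/2$ works.

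For step (i), the function $h\mapsto \int_{\R^d}\delta_1[f;h](x)^p\,dx$ is not monotone, but it has bounded "oscillation" across a dyadic block: if $2^{-k-1}\le h\le 2^{-k}$ then $[-h,h]^d\subset[-2^{-k},2^{-k}]^d\subset 2[-h,h]^d$, so by the triangle inequality in~\eqref{equivalentdelta} and the fact that the average over a cube controls the average over a subcube of comparable size, one gets $\delta_1[f;h](x)\lesssim \delta_1[f;2^{-k}](x) + (\text{something dominated by }\delta_1[f;2^{-k}])$ and similarly in the reverse direction, uniformly in $x$; integrating in $x$ and then summing the $q/p$-powers over $k\in\Z$ converts $\int_0^\infty(\cdots)\frac{dh}{h}$ into $\sum_{k\in\Z}(\cdots)$ with two-sided constants depending only on $p,q,d$. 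I would write this comparison using the representation~\eqref{equivalentdelta}, since the self-improving bound "average of $|g-\fint g|$ over a cube $\le$ twice the average over any larger concentric cube (after adjusting by the difference of the two averages, itself controlled)" is cleanest there.

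For step (ii), fix $k$ and $h=2^{-k}$; partition $\R^d$ into the cubes $Q\in\D_k$, so $\int_{\R^d}\delta_1[f;ch](x)^p\,dx=\sum_{Q}\int_Q \delta_1[f;ch](x)^p\,dx$. On each $Q$ I compare $\delta_1[f;ch](x)$ for $x\in Q$ with $\delta_1[f;c'h](c_Q)$: both are $L^1$-oscillations of $f$ over cubes of sidelength $\asymp h$ that are contained in, and contain, a fixed dilate of $Q$ about $c_Q$; writing the oscillation over a cube $R$ as $\fint_R\fint_R|f(u)-f(v)|\,du\,dv$ and using monotonicity of this quantity under enlarging $R$ together with the bound $\fint_{R}\fint_R \le C\fint_{R'}\fint_{R'}$ when $R\subset R'$ and $|R'|\le C|R|$, one obtains $\delta_1[f;ch](x)\asymp \delta_1[f;c'h](c_Q)$ for all $x\in Q$ with the appropriate choice of the constants $c,c',0.6$. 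Integrating over $Q$ gives $\int_Q\delta_1[f;ch]^p\,dx \asymp |Q|\,\delta_1[f;0.6\ell(Q)](c_Q)^p$, and summing over $Q\in\D_k$ finishes step (ii). Combining (i) and (ii), raising to the $q/p$-power, summing over $k$, and taking the $q$-th root yields $\|f\|_*\asymp\|f\|_{B_{p,q}^{0,1}}$.

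**Main obstacle.** The routine parts are the dyadic enlargement/shrinkage comparisons; the one place that needs care is making the constants match up simultaneously. One must choose the dilation factor in the cube around $c_Q$ (here $0.6$), the scale used in the continuous functional, and the overlap of the enlarged cubes so that a \emph{single} chain of inequalities controls $\delta_1[f;h](x)$ from above and below by $\delta_1[f;0.6\ell(Q)](c_Q)$ uniformly for $x\in Q$ and $h$ in the associated dyadic block; since the $d$-dimensional cubes $c_Q+[-0.6\ell(Q),0.6\ell(Q)]^d$ overlap boundedly as $Q$ ranges over $\D_k$, no genuine covering difficulty arises, but the bookkeeping of which cube contains which is the only thing preventing the proof from being a one-line "obvious." I expect the cleanest writeup to fix once and for all a finite collection of geometric constants and cite~\eqref{equivalentdelta} for the oscillation estimates.
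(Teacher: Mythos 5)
The paper gives no proof of this lemma (it is declared standard and omitted), so your sketch can only be judged on its own merits. Your two-step strategy --- discretize the $h$-integral over a geometric grid, then replace the $L_p(dx)$-norm of $\delta_1[f;h]$ at a fixed scale by the dyadic sum of center values --- is indeed the standard route and leads to a correct proof. Two of your intermediate claims, however, are false as literally stated, and both in the same way: you assert two-sided \emph{pointwise} comparisons where only one-sided ones hold. First, for $2^{-k-1}\le h\le 2^{-k}$ the inequality $\delta_1[f;h](x)\lesssim\delta_1[f;2^{-k}](x)$ is true (enlarge the cube, lose a volume factor $4^d$ in each average), but the ``reverse direction, uniformly in $x$'' is not: $f$ may be constant on $x+[-h,h]^d$ yet oscillate on $x+[-2^{-k},2^{-k}]^d$. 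Second, and more seriously, the claim $\delta_1[f;ch](x)\asymp\delta_1[f;c'h](c_Q)$ for all $x\in Q$ with a \emph{single} pair of constants $c,c'$ cannot hold: the containment $x+[-ch,ch]^d\subset c_Q+[-c'h,c'h]^d$ for all $x\in Q$ forces $c'\ge c+\tfrac12\,\ell(Q)/h$, while the reverse containment forces the opposite inequality, and a counterexample is any $f$ constant on $c_Q+[-c'h,c'h]^d$ but oscillating just outside it, evaluated at a corner $x$ of $Q$.

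The fix is to abandon the two-sided pointwise equivalence and run two independent one-sided chains, each at its own geometric grid of scales. For $\|f\|_*\lesssim\|f\|_{B^{0,1}_{p,q}}$: since $c_Q+[-0.6\ell,0.6\ell]^d\subset x+[-1.1\ell,1.1\ell]^d$ for every $x\in Q$, one gets $|Q|\,\delta_1[f;0.6\ell(Q)](c_Q)^p\lesssim\int_Q\delta_1[f;1.1\ell(Q)]^p$, sums over $Q\in\D_k$, and then majorizes $\big(\int\delta_1[f;1.1\cdot2^{-k}]^p\big)^{q/p}$ by $\int_{1.1\cdot 2^{-k}}^{2.2\cdot 2^{-k}}\big(\int\delta_1[f;h]^p\big)^{q/p}\frac{dh}{h}$ using the enlargement bound (the only direction that is pointwise true). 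For the converse: for $h\le 0.1\cdot\ell(Q)$ one has $x+[-h,h]^d\subset c_Q+[-0.6\ell,0.6\ell]^d$ for all $x\in Q$ (this is exactly where $0.6>0.5$ is used), whence $\int_{0.05\cdot2^{-k}}^{0.1\cdot2^{-k}}\big(\int\delta_1[f;h]^p\big)^{q/p}\frac{dh}{h}\lesssim\big(\sum_{Q\in\D_k}|Q|\delta_1[f;0.6\ell(Q)](c_Q)^p\big)^{q/p}$, and summing over $k$ recovers the full $h$-integral. So the skeleton of your argument is right and you correctly identify the constant bookkeeping as the crux, but the proof must be organized around these two asymmetric chains rather than a single pointwise $\asymp$; also drop the word ``monotonicity'' for the double-average oscillation, which is only almost-monotone up to the volume-ratio factor you state.
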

The number~$0.6$ in~\eqref{equivalentnormformula} may be replaced with any number larger than~$0.5$ (the cubes over which we compute oscillations should overlap). It is convenient for further reasonings that this number is smaller than~$1$. Lemma~\ref{equivalentnorm} is standard and we omit its proof.

\section{Case $q < p$}\label{S2}
Let~$\varphi$ be a smooth function with zero integral supported in the unit cube~$[-1/2,1/2]^d$. Let~$N$ be a large natural number. We equip the set~$[1\twodots 2^{dN}]$ with the counting measure and consider Lorentz spaces~$\ell_{p,q}^{2^{dN}}$ of finite sequences. Here and in what follows, the notation~$[a\twodots b]$ designates the integer interval~$\set{x\in \Z}{a\leq x \leq b}$. Let~$\A^N = \{a_j\}_{j=1}^{2^{dN}}$ be a sequence of positive numbers such that
\eq{
\|\A^N\|_{\ell_{p,q}^{2^{dN}}} \gtrsim N^{1/q-1/p}\|\A^N\|_{\ell_p^{2^{dN}}}.
}
Here and in what follows, the notation~$A \lesssim B$ means there exists a constant~$C$ independent of certain parameter such that~$A \leq CB$; the parameter and the nature of uniformity is usually clear from the context. For example, the constant should not depend on~$N$ in the above inequality. We will also write~$A \asymp B$ in the case~$A\lesssim B$ and~$B \lesssim A$. As an example of the sequence~$\A^N$, we may consider 
\eq{
a_j = 2^{-dn/p},\quad \text{when}\quad j\in [2^{dn}\twodots 2^{d(n+1)}),
}
here~$n=0,1,\ldots, N-1$. 

The adjustment of~$\varphi$ to a cube~$Q$ is the function~$\varphi_Q$ given by the formula
\eq{
\varphi_Q(x) = \varphi\Big(\frac{x-c_Q}{\ell(Q)}\Big),\qquad x\in \R^d.
}
Let~$n$ be another natural number that is much larger than~$N$ (the precise dependence will be provided later). Let~$Q_{1}, Q_2,\ldots, Q_{2^{dN}}$ be cubes in~$\D_N$ lying inside~$[0,1]^d$. Let~$C_j$ be cubes in~$\D_n$ such that for each~$j \in [1\twodots 2^{dN}]$ the cube~$C_j$ contains~$c_{Q_j}$. We are ready to define the function~$f_{N,n}$ that will prove Proposition~\ref{Prop1}:
\eq{\label{FormulaForf}
f_{N,n} = 2^{\frac{dn}{p}}\sum\limits_{j=1}^{2^{dN}}a_j\varphi_{C_j}.
}
The simple identity
\eq{
\|f_{N,n}\|_{L_{p,q}} \asymp \|\A^N\|_{\ell_{p,q}^{2^{dN}}}
}
implies Proposition~\ref{Prop1} will be proved once we obtain the following lemma.
\begin{Le}
If~$1\leq q < p < \infty$, then
\eq{\label{LemmaInequality}
\varlimsup\limits_{n\to \infty}\|f_{N,n}\|_{B_{p,q}^{0,1}} \lesssim \|\A^N\|_{\ell_p^{2^{dN}}}.
}
\end{Le}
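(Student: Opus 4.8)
The plan is to estimate $\|f_{N,n}\|_{B_{p,q}^{0,1}}$ via the discrete seminorm $\|\cdot\|_*$ from Lemma~\ref{equivalentnorm}, i.e.\ to bound
\eq{
\sum\limits_{k\in \Z} \Big(\sum\limits_{Q\in \D_k} |Q| \big(\delta[f_{N,n};0.6\ell(Q)](c_Q)\big)^p\Big)^{q/p}
}
and then take $\varlimsup_{n\to\infty}$. Write $f_{N,n} = 2^{dn/p}\sum_j a_j \varphi_{C_j}$. The essential point is a \emph{scale-localization}: the bump $\varphi_{C_j}$ is supported in $C_j\in\D_n$, has zero mean, and is smooth at scale $2^{-n}$. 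Hence for a cube $Q\in\D_k$ the quantity $\delta[\varphi_{C_j};0.6\ell(Q)](c_Q)$ is negligible unless $Q$ is ``comparable to or larger than'' $C_j$ and geometrically close to it. I would split the sum over $k$ into three regimes and handle each separately: (i) $k > n$ (scales finer than the bump), where smoothness of $\varphi$ gives $\delta[\varphi_{C_j};h](x)\lesssim h\cdot 2^{n}\cdot\mathbf 1_{\text{near }C_j}$, producing a geometric series in $2^{k-n}$ that sums to $O(1)$; (ii) $k$ between roughly $N$ and $n$ (intermediate scales), where the zero-mean of $\varphi$ makes $\delta[\varphi_{C_j};h](x)$ behave like $|C_j|/|Q|\cdot\|\varphi\|_\infty$ times the indicator that $C_j$ is within $O(h)$ of $c_Q$ — here the smallness of the ratio $2^{-n}/2^{-k}$ drives the decay; (iii) $k < N$ or so (scales coarser than the $Q_j$-configuration), which contributes boundedly because there are few such cubes.

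The key computation is the contribution of the intermediate and coarse scales. For a fixed dyadic scale $2^{-k}$ with $N\lesssim k\le n$, only the cubes $Q\in\D_k$ lying within distance $O(2^{-k})$ of some $C_j$ matter, and for each such $Q$ one gets $\delta[f_{N,n};0.6\ell(Q)](c_Q)\lesssim 2^{dn/p}\,a_{j}\,(2^{-n}/2^{-k})^{d}$ when $Q$ contains exactly the single point $c_{Q_j}$'s ball — more precisely one must be careful because several $C_j$ can sit inside one large $Q$ once $2^{-k}\ge 2^{-N}$, but by construction the $Q_j$ are $2^{-N}$-separated, so for $k\ge N$ each $Q\in\D_k$ meets at most one relevant $C_j$. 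Summing $|Q|\,\delta^p$ over $Q\in\D_k$ then yields a bound of the form
\eq{
\sum\limits_{Q\in\D_k}|Q|\,\delta[f_{N,n};0.6\ell(Q)](c_Q)^p \;\lesssim\; 2^{(k-n)d(p-1)}\sum\limits_{j=1}^{2^{dN}} a_j^p \;=\; 2^{(k-n)d(p-1)}\,\|\A^N\|_{\ell_p^{2^{dN}}}^p,
}
valid for $N\le k\le n$ (a parallel estimate with a factor $2^{(n-k)d(p-1)}$ or similar holds for $k>n$ using smoothness; for $k<N$ one gets $\|\A^N\|_{\ell_p}^p$ times a fixed constant depending only on $\varphi$ and $d$). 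Raising to the power $q/p$, which is legitimate and \emph{favorable} precisely because $q/p<1$ would be the wrong direction — here $q<p$ so $q/p<1$ and $(\sum_Q)^{q/p}$ is dominated termwise after pulling out, one gets $2^{(k-n)d(p-1)q/p}\,\|\A^N\|_{\ell_p}^q$, and summing the geometric series over $k\le n$ (and the companion series over $k>n$) gives a bound $\lesssim \|\A^N\|_{\ell_p}^q$ with a constant independent of $n$ and $N$. Taking $q$-th roots and then $\varlimsup_{n\to\infty}$ yields~\eqref{LemmaInequality}.

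The main obstacle is the \emph{intermediate-scale bookkeeping}: one must show that the bump's zero-mean cancellation really does produce the gain $(2^{-n}/2^{-k})^{d}$ (not merely $2^{-n}/2^{-k}$) in $\delta[\varphi_{C_j};h](c_Q)$ when $h\asymp 2^{-k}\gg 2^{-n}$, because only a $d$-dimensional volume gain makes the exponent $d(p-1)$ positive and the series convergent. This requires expanding $\delta[\varphi_{C_j};h](x)=\bigl(\fint_{[-h,h]^d}|\varphi_{C_j}(x+y)-\fint\varphi_{C_j}|^{}\,dy\bigr)$ (using the form~\eqref{equivalentdelta} with $r=1$), observing that $\fint_{[-h,h]^d}\varphi_{C_j}(x+z)\,dz = O(|C_j|/h^d)=O((2^{-n}/2^{-k})^d)$ since $\int\varphi=0$ and $\varphi$ is bounded with support of measure $|C_j|$, and that $\varphi_{C_j}(x+y)$ is nonzero only on a $y$-set of measure $O(|C_j|)$, so the average of $|\varphi_{C_j}(x+y)|$ over the big cube is itself $O((2^{-n}/2^{-k})^d)$. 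A secondary technical point is the transition region $k\approx N$ and the requirement $n\gg N$ in the hypothesis; one simply chooses $n$ large enough that the finite ``boundary'' contributions from $k\le N$ are absorbed into the geometric tails, which is where the $\varlimsup_{n\to\infty}$ (rather than a uniform-in-$n$ bound) becomes convenient. Everything else is routine summation of geometric series and an application of the equivalence of $\|\cdot\|_*$ with $\|\cdot\|_{B_{p,q}^{0,1}}$.
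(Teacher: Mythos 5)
Your overall strategy coincides with the paper's: pass to the discrete seminorm of Lemma~\ref{equivalentnorm}, split the sum over scales $k$, and your estimates for $N\leq k\leq n$ (the support-measure gain $2^{d(k-n)}$ per bump, giving $2^{d(p-1)(k-n)}\|\A^N\|_{\ell_p^{2^{dN}}}^p$ per scale) and for $k>n$ (smoothness of $\varphi$) are exactly the ones in the text. The gap is the regime $0\leq k<N$, which you fold into ``$k<N$ or so'' and dismiss with the claims that ``there are few such cubes'' and that one gets ``$\|\A^N\|_{\ell_p}^p$ times a fixed constant.'' Neither is true there: a cube $Q\in\D_k$ with $0\leq k<N$ meets about $2^{d(N-k)}$ of the $C_j$, there are up to $2^{dk}$ relevant cubes per scale and $N$ scales in all, and the per-scale quantity $\bigl(\sum_{Q\in\D_k}|Q|\,\delta[f;0.6\ell(Q)](c_Q)^p\bigr)^{q/p}$ does not decay in $k$ on this range. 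Without further input each of these $N$ scales contributes on the order of $\|\A^N\|_{\ell_p}^q$ (this is exactly what happens if one takes $n=N$, i.e.\ $C_j=Q_j$), so the total would be $N\|\A^N\|_{\ell_p}^q$; since $\|\A^N\|_{\ell_{p,q}}\asymp N^{1/q-1/p}\|\A^N\|_{\ell_p}$, a loss of $N^{1/q}$ destroys the counterexample entirely. This range is the heart of the lemma, not a ``secondary technical point,'' and it is not ``absorbed into geometric tails.''

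The correct mechanism, which you gesture at (``choose $n$ large enough'') but never implement, is quantitative: because each bump is concentrated on a cube $C_j$ of measure $2^{-dn}\ll 2^{-dN}$, one has $\sup_{Q\in\D_N}\fint_Q|f|\lesssim 2^{dN}2^{-\frac{d(p-1)}{p}n}\|\A^N\|_{\ell_\infty^{2^{dN}}}$, and averaging over coarser cubes only decreases this bound; hence the whole contribution of $0\leq k<N$ is $\lesssim N\,2^{dqN}2^{-\frac{dq(p-1)}{p}n}\|\A^N\|_{\ell_\infty^{2^{dN}}}^q$, which tends to $0$ as $n\to\infty$ for fixed $N$ since $p>1$. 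That is precisely why the statement is a $\varlimsup_{n\to\infty}$ and why the second scale parameter $n$ is introduced at all. You need to supply this estimate (or an equivalent one exploiting the factor $2^{-d(p-1)(n-N)/p}$) to close the argument; the remaining regimes ($k<0$, $N\leq k\leq n$, $k>n$) are handled correctly in your sketch.
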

\begin{proof}
We will be writing~$f$ for~$f_{N,n}$ for simplicity. To prove the desired bound, we use Lemma~\ref{equivalentnorm}, raise the left hand side of~\eqref{LemmaInequality} to the power~$q$ and arrive at
\eq{
\sum\limits_{k\in \Z} \Big(\sum\limits_{Q\in \D_k} |Q| \big(\delta[f;0.6\ell(Q)](c_Q)\big)^p\Big)^{\frac{q}{p}}
}
We will estimate the parts of the sum where~$k < 0$,~$k\in [0\twodots  N-1]$,~$k \in [N\twodots n]$, and~$k > n$ individually.

\paragraph{Estimate for~$k <0$.} 
Fix~$k$ and start with the estimate
\mlt{\label{eq14}
\delta[f;0.6\ell(Q)](c_Q)\lesssim \fint\limits_{1.2Q}\Big|f(x) - \fint\limits_{1.2Q} f(y)\,dy\Big|\,dx\\ \leq 2\fint\limits_{1.2Q} |f(x)|\,dx \lesssim 2^{dk}\|f\|_{L_1} \leq 2^{dk}\|f\|_{L_p} \lesssim 2^{dk}\|\A^N\|_{\ell_p^{2^{dN}}},\qquad Q \in \D_k.
}
We note that at for at most~$3^d$ dyadic cubes~$Q$ the support of~$f$ intersects~$1.2Q$. So, we obtain the bound
\eq{\label{eq15}
\sum\limits_{Q\in \D_k} |Q| \big(\delta[f;0.6\ell(Q)](c_Q)\big)^p \lesssim 2^{d(p-1)k}\|\A^N\|_{\ell_p^{2^{dN}}}^p.
}
Therefore,
\eq{\label{eq16}
\sum\limits_{k\leq 0} \Big(\sum\limits_{Q\in \D_k} |Q| \big(\delta[f;0.6\ell(Q)](c_Q)\big)^p\Big)^{\frac{q}{p}}\lesssim \sum\limits_{k\leq 0}2^{\frac{dq(p-1)}{p}k}\|\A^N\|_{\ell_p^{2^{dN}}}^q \lesssim \|\A^N\|_{\ell_p^{2^{dN}}}^q
}
since~$p > 1$.

\paragraph{Estimate for~$k\in [0\twodots  N-1]$.} Here we start with the estimate
\eq{
\sup\limits_{Q \in \D_N}\fint\limits_Q|f(x)| \lesssim 2^{dN}d^{-\frac{d(p-1)}{p}n}\|\A^N\|_{\ell_\infty^{2^{dN}}},
}
which follows immediately from~\eqref{FormulaForf} and the fact that each cube~$Q \in \D_N$ contains at most one~$C_j$. Since averaging does not increase the~$L_\infty$ norm of a function,
we obtain
\eq{
\sup\limits_{Q\in \D_k} \fint\limits_Q|f(x)| \lesssim 2^{dN}d^{-\frac{d(p-1)}{p}n}\|\A^N\|_{\ell_\infty^{2^{dN}}}
}
for any~$k \leq N$. Therefore,
\eq{\label{eq19}
\sum\limits_{Q\in \D_k} |Q| \big(\delta[f;0.6\ell(Q)](c_Q)\big)^p \lesssim 2^{dpN}d^{-d(p-1)n}\|\A^N\|_{\ell_\infty^{2^{dN}}}^p,\qquad k \leq N,
}
and
\eq{\label{eq222}
\sum\limits_{0 \leq k < N} \Big(\sum\limits_{Q\in \D_k} |Q| \big(\delta[f;0.6\ell(Q)](c_Q)\big)^p\Big)^{\frac{q}{p}} \lesssim N 2^{dqN}d^{-\frac{dq(p-1)}{p}n}\|\A^N\|_{\ell_\infty^{2^{dN}}}^q.
}
This quantity tends to zero when~$n \to \infty$ and~$N$ is fixed.

\paragraph{Estimate for~$k\in [N\twodots n]$.} It suffices to prove the bound
\eq{\label{IndScaleIntCubes}
\Big(\sum\limits_{Q\in \D_k} |Q|\big(\delta[f;0.6\ell(Q)](c_Q)\big)^p\Big)^{1/p}\lesssim 2^{\frac{d(p-1)}{p}(k-n)}\|\A^N\|_{\ell_p^{2^{dN}}},\qquad k\in [N\twodots n].
}
Fix~$k$ and denote by~$K_j$ a cube in~$\D_k$ such that~$1.2 K_j$ contains~$C_j$. Note that there are at most~$3^d$ such cubes for any fixed~$j$ and they are different for different~$j$ (since the cubes~$C_j$ contain the centers of different dyadic cubes of generation~$N$). Then, by~\eqref{equivalentdelta},
\eq{
\delta[f;0.6\ell(K_j)](c_{K_j}) \leq 2\fint\limits_{1.2K_j}|f(x)|\,dx = 2^{dk}\int\limits_{C_j}|f(x)|\,dx \lesssim 2^{d(k-n)+\frac{d}{p}n}a_j.
}
Using the information about the cubes~$K_j$ given above, we estimate
\eq{
\Big(\sum\limits_{Q\in \D_k} |Q|\big(\delta[f;0.6\ell(Q)](c_Q)\big)^p\Big)^{1/p} \lesssim \Big(\sum\limits_{j=1}^{2^{dN}}2^{dp(k-n)+dn-dk} a_j^p\Big)^{1/p},
}
which leads to~\eqref{IndScaleIntCubes}. Thus,
\eq{\label{eq218}
\sum\limits_{N \leq k < n} \Big(\sum\limits_{Q\in \D_k} |Q| \big(\delta[f;0.6\ell(Q)](c_Q)\big)^p\Big)^{\frac{q}{p}} \lesssim \sum\limits_{k \leq n}2^{\frac{dq(p-1)}{p}(k-n)}\|\A^N\|_{\ell_p^{2^{dN}}}^q \lesssim \|\A^N\|_{\ell_p^{2^{dN}}}^q.
}

\paragraph{Estimate for~$k > n$.} Fix~$k$. In this case,
\mlt{\label{eq25}
\Big(\sum\limits_{Q\in \D_k} |Q|\big(\delta[f;0.6\ell(Q)](c_Q)\big)^p\Big)^{1/p} \lesssim \|\A^N\|_{\ell_p^{2^{dN}}}\Big(\sum\limits_{Q\in \D_k} |Q|\big(\delta[2^{\frac{d}{p}n}\varphi_{C_1};0.6\ell(Q)](c_Q)\big)^p\Big)^{1/p}\\ 
= \|\A^N\|_{\ell_p^{2^{dN}}}\Big(\sum\limits_{Q\in \D_{k-n}} |Q|\big(\delta[\varphi;0.6\ell(Q)](c_Q)\big)^p\Big)^{1/p}
}
by dilation. It remains to prove that
\eq{\label{eq26}
\Big(\sum\limits_{Q\in \D_{m}} |Q|\big(\delta[\varphi;0.6\ell(Q)](c_Q)\big)^p\Big)^{1/p} \lesssim 2^{-m},\qquad m \geq 0,
}
which follows from the estimate
\eq{
\Big\|\delta[\varphi;0.6\cdot 2^{-m}](\cdot)\Big\|_{L_\infty} \lesssim 2^{-m}\|\nabla \varphi\|_{L_\infty}.
}
Therefore,
\eq{
\Big(\sum\limits_{Q\in \D_k} |Q|\big(\delta[f;0.6\ell(Q)](c_Q)\big)^p\Big)^{1/p} \lesssim 2^{-(k-n)} \|\A^N\|_{\ell_p^{2^{dN}}},\qquad k > n,
}
and
\eq{\label{eq224}
\sum\limits_{k > n} \Big(\sum\limits_{Q\in \D_k} |Q| \big(\delta[f;0.6\ell(Q)](c_Q)\big)^p\Big)^{\frac{q}{p}}\lesssim \sum\limits_{k > n}2^{-(k-n)q} \|\A^N\|_{\ell_p^{2^{dN}}}^q \lesssim \|\A^N\|_{\ell_p^{2^{dN}}}^q.
}

\paragraph{Conclusion.} We collect the estimates~\eqref{eq16},~\eqref{eq222},~\eqref{eq218}, and~\eqref{eq224}. We choose~$n$ so large that the right hand side of~\eqref{eq222} does not exceed the right hand side of~\eqref{LemmaInequality} and complete the proof of the lemma.
\end{proof}

\section{Case $q > p$}\label{S3}
Let us call functions of the type~$\sum_{j}\varphi_{Q_j}$, where~$\sum_j |Q_j| = 1$ and the cubes~$Q_j$ are dyadic, disjoint, and have the same sidelength, the \emph{building blocks}. To prove Proposition~\ref{Prop2}, it suffices, given any~$T$, to construct the function~$\Phi = \sum_{k=1}^T \Phi_k$, where the~$\Phi_k$ are building blocks with disjoint supports, and~$\|\Phi\|_* \lesssim T^{1/q}$.
Then, by Lemma~\ref{equivalentnorm},~$\|\Phi\|_{B_{p,q}^{0,1}}\lesssim T^{1/q}$, whereas~$\|\Phi\|_{L_{p,q}}\gtrsim T^{1/p}$ (since all the building blocks have one and the same distribution function). This proves Proposition~\ref{Prop2} since~$T$ may be chosen arbitrarily large. 

The function~$\Phi$ may be easily constructed with the consecutive application of the following lemma.
\begin{Le}\label{InductionStep}
There exists a constant~$C$ such that for any smooth compactly supported function~$f$ and any~$\eps > 0$, there exists a building block~$\Psi$ such that
\eq{
\|f + \Psi\|_* \leq (1+\eps)\big(\|f\|_* + C\big)^{1/q}
}
and the supports of~$f$ and~$\Psi$ are disjoint.
\end{Le}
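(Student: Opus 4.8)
The plan is to take $\Psi$ to be a building block supported on a tiny region, far away from $\supp f$, and built from dyadic cubes of a very small sidelength $2^{-m}$, with $m$ chosen enormous depending on $f$ and $\eps$. The heuristic is that on the scales where $\Psi$ "lives" (generations $k\gtrsim m$) the function $f$ is essentially constant and contributes negligibly to the oscillation $\delta[f+\Psi;0.6\ell(Q)](c_Q)$, while on the coarse scales $f$ dominates and $\Psi$ is invisible because it has small mass concentrated in a small set. Then I expect $\|f+\Psi\|_*^q \approx \|f\|_*^q + \|\Psi\|_*^q$ up to cross terms controlled by $\eps$, and the whole point is to show $\|\Psi\|_*^q \le C$ for a building block, uniformly — this is where the exponent $1/q$ on the right-hand side comes from.

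The key steps, in order. First, normalize: choose a building block $\Psi_0$ made of disjoint dyadic cubes $\{R_i\}$ of a fixed sidelength $2^{-\ell_0}$ with $\sum_i|R_i|=1$, say all contained in a single unit cube $W$ disjoint from $\supp f$; the shape of $\Psi_0$ (how many cubes, how they are arranged) will be pinned down by the computation of $\|\Psi_0\|_*$. Second, compute $\|\Psi_0\|_*^q$ using Lemma~\ref{equivalentnorm}, splitting the sum over generations $k$ into $k<\ell_0$ and $k\ge \ell_0$ exactly as in the proof of the $q<p$ lemma. For $k\ge\ell_0$ the estimate $\|\delta[\varphi_R;0.6\cdot 2^{-k}]\|_\infty\lesssim 2^{\ell_0-k}$ together with the fact that only $O(1)$ cubes of $\D_k$ meet each $R_i$ gives, after summing the geometric series in $k$, a bound $\sum_i|R_i|\cdot(\text{const})=(\text{const})$; the contribution is $\lesssim 1$. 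For $k<\ell_0$ the oscillation of $\Psi_0$ over a cube $Q\in\D_k$ is at most $\fint_{1.2Q}|\Psi_0|\lesssim 2^{dk}\|\Psi_0\|_{L_1}=2^{dk}$, but only $O(1)$ such $Q$ meet $W$, so $\sum_{Q}|Q|(\ldots)^p\lesssim 2^{d(p-1)k}$ and summing $2^{dq(p-1)k/p}$ over $k<\ell_0$ converges (here $p>1$ is used); again the contribution is $\lesssim 1$, and crucially the bound is independent of $\ell_0$ (the divergent-looking part $k\to-\infty$ converges). This yields $\|\Psi_0\|_*^q\le C_0$ with $C_0$ absolute. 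Third, now translate $\Psi_0$ so that $W$ is disjoint from $\supp f$ and rescale it down: replace each $R_i$ by a copy shrunk by a factor $2^{-s}$, i.e.\ pass to sidelength $2^{-(\ell_0+s)}$, keeping $\sum|R_i|=1$ — this is still a building block $\Psi=\Psi^{(s)}$, and by the same dilation identity used in~\eqref{eq25} its $*$-norm is unchanged, $\|\Psi\|_*^q\le C_0$.

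Fourth, estimate the mixed object $\|f+\Psi\|_*^q$. Write the generation sum and split at a threshold $k_*$ chosen large. For $k\ge k_*$: if $s$ is large enough that $\supp\Psi$ is a union of cubes of generation $\ge k_*$ contained in $W$, then over any $Q\in\D_k$ meeting $\supp\Psi$ the function $f$ is smooth, hence $\delta[f;0.6\ell(Q)](c_Q)\lesssim 2^{-k}\|\nabla f\|_\infty$, and summing $|Q|\cdot 2^{-kp}$ over the $O(|W|2^{dk})$ relevant cubes and then over $k\ge k_*$ gives a quantity $\le\eta(k_*,s)$ that can be made $<\eps'$; the remaining $Q$'s at generation $k$ see only $f$ or only $\Psi$, so $\delta[f+\Psi]=\delta[f]$ or $\delta[\Psi]$ there, and using the elementary inequality $(a+b)^q\le a^q+b^q+C_q\eps'(a^q+b^q)+C_{q,\eps'}(\ldots)$ — more cleanly, using quasi-subadditivity of $\|\cdot\|_*$ and then the scalar inequality $(X+Y)^q\le (1+\eps)X^q+C_\eps Y^q$ — one absorbs the small cross term. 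For $k<k_*$: here $\Psi$ contributes at most $\fint_{1.2Q}|\Psi|\lesssim 2^{dk}$ over the $O(1)$ cubes meeting $W$, which is a bounded contribution $\le C_1$ to $\|\Psi\restriction_{k<k_*}\|_*^q$; combined with the $k\ge k_*$ part this still gives $\|\Psi\|_*\le C_0^{1/q}$, and the point of the splitting is only to separate the scales where $f$ and $\Psi$ interact. Putting the pieces together with the scalar inequality $(\|f\|_*^q+\|\Psi\|_*^q+\text{cross})\le(1+\eps)^q(\|f\|_*^q+C)$ for $C$ absorbing $C_0$, and taking $q$-th roots, delivers $\|f+\Psi\|_*\le(1+\eps)(\|f\|_*^q+C)^{1/q}\le(1+\eps)(\|f\|_*+C)^{1/q}$ — note $(\|f\|_*^q+C)^{1/q}\le(\|f\|_*+C)^{1/q}$ only fails for $\|f\|_*$ small, so one should state the conclusion with $\|f\|_*^q+C$ or simply enlarge $C$; I will keep track of this and choose the formulation matching the statement.

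The main obstacle is the fourth step: cleanly decoupling $f$ and $\Psi$ on the overlap scales $k\ge k_*$ so that the cross terms are genuinely of size $\eps$ times the main terms, rather than an additive error. The dangerous cubes are those $Q\in\D_k$ with $k$ comparable to the generation of $\Psi$ that straddle both a piece of $\Psi$ and nearby empty space — but since $\supp f$ is uniformly far from $W$ and $f$ is Lipschitz, on such $Q$ the value $f(x)$ is within $2^{-k}\|\nabla f\|_\infty$ of a constant, so $\delta[f+\Psi;0.6\ell(Q)](c_Q)\le\delta[\Psi;\ldots](c_Q)+2^{-k}\|\nabla f\|_\infty$, and the extra term, after raising to power $p$, multiplying by $|Q|$, summing over $O(2^{dk}|W|)$ cubes and then over $k\ge k_*$, is a convergent geometric series whose sum $\to 0$ as $k_*\to\infty$; this is where largeness of $s$ (equivalently of $k_*$) is spent. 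Everything else is a repetition of the geometric-series bookkeeping already carried out in Section~\ref{S2}.
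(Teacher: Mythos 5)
Your overall architecture --- put $\Psi$ at a very fine, far-away scale, decouple the generation sums of $f$ and $\Psi$, and reduce everything to a uniform bound $\|\Psi\|_*^q\le C$ for a building block --- matches the paper's strategy, which formalizes the decoupling via Lemma~\ref{EllqDisjoint} together with the bounds \eqref{SmoothBounds1}--\eqref{SmoothBounds2}. However, your central claim, namely that a building block $\Psi_0=\sum_i\varphi_{R_i}$ with $\sum_i|R_i|=1$ and all $R_i$ of sidelength $2^{-\ell_0}$ contained in a single unit cube $W$ satisfies $\|\Psi_0\|_*^q\le C_0$ with $C_0$ absolute, is false, and this invalidates the proof. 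Since the $R_i$ are disjoint, have total measure $1$, and lie in a unit cube, they tile $W$ up to a null set. For $Q\in\D_k$ with $0\le k<\ell_0$ and $Q\subset W$, the set $1.2Q$ contains about $2^{d(\ell_0-k)}$ of the $R_i$, so $\fint_{1.2Q}|\Psi_0|\asymp\|\varphi\|_{L_1}\asymp 1$ while $\fint_{1.2Q}\Psi_0\approx 0$, whence $\delta[\Psi_0;0.6\ell(Q)](c_Q)\asymp 1$. There are $2^{dk}$ such cubes, so $\sum_{Q\in\D_k}|Q|\big(\delta[\Psi_0;0.6\ell(Q)](c_Q)\big)^p\asymp 1$ for every $k\in[0\twodots\ell_0)$, and therefore $\|\Psi_0\|_*^q\gtrsim\ell_0$, which is unbounded. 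Your estimate for this range rests on the assertion that only $O(1)$ cubes of $\D_k$ meet $W$; that holds only for $k\le 0$, and the series $\sum_{k<\ell_0}2^{dq(p-1)k/p}$ in fact grows like $2^{dq(p-1)\ell_0/p}$ --- the trouble is at the upper end $k\uparrow\ell_0$, not at $k\to-\infty$.

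This is precisely the difficulty the paper's construction is designed to overcome: a building block has bounded $\|\cdot\|_*$ only if its constituent cubes are spread so thinly that the intermediate-scale local $L_1$ averages decay, and this forces the support to occupy a region of large volume rather than a unit cube. Concretely, the paper takes $2^{dN}$ cubes of sidelength $2^{-N}$ separated by gaps of order $2^{n-2N}\gg 2^{-N}$ (the dilated configuration of Section~\ref{S2}), so that the generation profile $l\mapsto\big(\sum_{Q\in\D_l}|Q|\delta[\Psi;0.6\ell(Q)](c_Q)^p\big)^{q/p}$ decays exponentially away from a single prescribed generation $M$, as in \eqref{Concentration}; choosing $M$ large then makes this profile almost disjoint from that of $f$, and Lemma~\ref{EllqDisjoint} finishes the argument. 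A secondary issue: your step of ``shrinking each $R_i$ by $2^{-s}$ while keeping $\sum_i|R_i|=1$'' is not a dilation (a genuine dilation would change the total measure to $2^{-sd}$), so the dilation invariance of $\|\cdot\|_*$ cannot be invoked for it as described. The scale-decoupling in your fourth step is sound in spirit, but without a building block of uniformly bounded $*$-norm concentrated at a far scale, the lemma does not follow.
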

We start with a simple observation. Let~$a = \{a_k\}_{k\in \Z}$ and~$b = \{b_k\}_{k\in\Z}$ be two~$\ell_q$ sequences. If the sequences~$a$ and~$b$ have disjoint supports, then~$\|a+b\|_{\ell_q}^q = \|a\|_{\ell_q}^q + \|b\|_{\ell_q}^q$. We claim that if the sequences~$a$ and~$b$ are almost disjointly supported, then this identity almost holds. We omit the proof of the following lemma.
\begin{Le}\label{EllqDisjoint}
Let~$a$ and~$b$ be two~$\ell_q$ sequences, where~$q\in [1,\infty)$ is a fixed parameter. For any~$\eps > 0$ there exists~$\delta > 0$ such that if 
\alg{
&\sum\limits_{k\in I}|a_k|^q \geq (1-\delta)\|a\|_{\ell_q}^q;\\
&\sum\limits_{k\in J}|b_k|^q \geq (1-\delta)\|b\|_{\ell_q}^q
}
for some disjoint sets~$I,J \subset \Z$, then
\eq{
\|a+b\|_{\ell_q}^q \leq (1+\eps)\Big(\|a\|_{\ell_q}^q + \|b\|_{\ell_q}^q\Big).
}
\end{Le}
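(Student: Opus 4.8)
The plan is to reduce the statement to a single elementary pointwise inequality applied region by region. First I would record the following numerical fact: for every $\eta > 0$ there is a constant $C_\eta$ (depending also on~$q$) such that
\eq{
(x+y)^q \leq (1+\eta)x^q + C_\eta y^q \qquad \text{for all } x,y \geq 0.
}
This follows from a two-case argument: fixing a small $\eta' = \eta'(\eta)$, if $y \leq \eta' x$ then $(x+y)^q \leq (1+\eta')^q x^q \leq (1+\eta)x^q$, whereas if $y > \eta' x$ then $(x+y)^q \leq (1+1/\eta')^q\, y^q$, so that $C_\eta = (1+1/\eta')^q$ works. The role of this inequality is that on index sets where~$a$ is the dominant sequence we may absorb~$a$ with only a~$(1+\eta)$ loss and pay a large constant only on the small pieces of~$b$.

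Next I would decompose $\Z = I \sqcup J \sqcup K$, where $K = \Z \setminus (I\cup J)$; this is a genuine partition precisely because $I$ and $J$ are disjoint by hypothesis. The point is that each of $a$ and $b$ carries all but a $\delta$-fraction of its mass on $I$ and $J$ respectively, and disjointness forces the ``wrong'' pieces to be small: since $I \subseteq \Z \setminus J$ we get $\sum_{k \in I}|b_k|^q \leq \delta\|b\|_{\ell_q}^q$, and symmetrically $\sum_{k\in J}|a_k|^q \leq \delta\|a\|_{\ell_q}^q$; likewise $K$ avoids both $I$ and $J$, so $\sum_{k\in K}|a_k|^q \leq \delta\|a\|_{\ell_q}^q$ and $\sum_{k\in K}|b_k|^q \leq \delta\|b\|_{\ell_q}^q$. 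These four tail bounds are the only consequences of the hypotheses that I will use.

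Then I would estimate $\|a+b\|_{\ell_q}^q$ as the sum of its contributions over $I$, $J$, and $K$. On $I$ the sequence $a$ is the main term and $b$ is the error, so the pointwise inequality with $x=|a_k|$, $y=|b_k|$ gives $\sum_{k\in I}|a_k+b_k|^q \leq (1+\eta)\|a\|_{\ell_q}^q + C_\eta \delta\|b\|_{\ell_q}^q$; on $J$ the symmetric estimate yields $(1+\eta)\|b\|_{\ell_q}^q + C_\eta \delta\|a\|_{\ell_q}^q$; on $K$ both terms lie in the tails and the crude convexity bound $(x+y)^q \leq 2^{q-1}(x^q+y^q)$ contributes at most $2^{q-1}\delta\big(\|a\|_{\ell_q}^q + \|b\|_{\ell_q}^q\big)$. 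Summing the three regions gives
\eq{
\|a+b\|_{\ell_q}^q \leq \big[(1+\eta) + (C_\eta + 2^{q-1})\delta\big]\big(\|a\|_{\ell_q}^q + \|b\|_{\ell_q}^q\big).
}

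Finally I would choose the parameters in the correct order: given $\eps$, first set $\eta = \eps/2$, which pins down $C_\eta$, and only then take $\delta$ so small that $(C_\eta + 2^{q-1})\delta \leq \eps/2$, making the bracket at most $1+\eps$. There is no substantial obstacle here; the lemma is elementary. The only subtlety worth flagging is the order of quantifiers — $C_\eta$ must be fixed from $\eta$ before $\delta$ is selected, since $\delta$ is permitted to depend on all the preceding data — together with the simple but essential observation that the disjointness of $I$ and $J$ is exactly what routes all three cross-contributions into the $\delta$-tails.
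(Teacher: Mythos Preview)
Your argument is correct. The paper explicitly omits the proof of this lemma, so there is no approach to compare against; your region-by-region application of the elementary inequality $(x+y)^q \leq (1+\eta)x^q + C_\eta y^q$, together with the observation that disjointness of $I$ and $J$ forces all cross terms into the $\delta$-tails, is a clean and standard way to justify it.
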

When proving Lemma~\ref{InductionStep}, we may assume~$f$ is a smooth function supported in the unit cube. For such functions, we have the estimates
\alg{
\label{SmoothBounds1}\Big(\sum\limits_{Q\in \D_k}|Q|\big(\delta[f;0.6\ell(Q)](c_Q)\big)^p\Big)^{q/p} \lesssim 2^{\frac{dq(p-1)}{p}k}\|f\|_{L_p}^q,\qquad k \leq 0;\\
\label{SmoothBounds2}\Big(\sum\limits_{Q\in \D_k}|Q|\big(\delta[f;0.6\ell(Q)](c_Q)\big)^p\Big)^{q/p} \lesssim 2^{-qk}\|\nabla f\|_{L_\infty}^q,\qquad k \geq 0,
}
which are obtained in the same way as~\eqref{eq16} and~\eqref{eq26}, respectively.
\begin{proof}[Proof of Lemma~\ref{InductionStep}.] Assume~$f$ is supported in the unit cube. By Lemma~\ref{EllqDisjoint} and the bounds~\eqref{SmoothBounds1},~\eqref{SmoothBounds2}, it suffices, given some large~$M \in \mathbb{N}$, to construct a building block~$\Phi$ such that~$\|\Phi\|_* \gtrsim 1$, and
\eq{\label{Concentration}
\Big(\sum\limits_{Q\in \D_l}|Q|\big(\delta[\Phi;0.6\ell(Q)](c_Q)\big)^p\Big)^{q/p}\lesssim 2^{-\nu |l-M|},\qquad l\in \Z,
}
where~$\nu$ is a fixed positive number independent of~$M$ (the multiplicative constants in the inequalities are also independent of~$M$). To do this, we pick some~$N$,~$n \geq 3N$, and construct the function~$\tilde{\Psi}$ similar to~\eqref{FormulaForf}, i.e.
\eq{
\tilde{\Psi} = 2^{\frac{dn}{p}}\sum\limits_{j=1}^{2^{dN}}2^{-\frac{dN}{p}}\varphi_{C_j}.
}
In other words, we plug~$\A^N = \{2^{-dN/p}\}_{j=1}^{2^{dN}}$ into~\eqref{FormulaForf}. Note that~$\|\A^N\|_{\ell_p^{2^{dN}}}=1$. The function~$\Psi$ is then defined as the~$2^{n-N}$-dilation of~$\tilde{\Psi}$ with the preservation of the~$L_p$ norm, i.e.
\eq{
\Psi = \sum\limits_{j=1}^{2^{dN}}\varphi_{2^{n-N}C_j}.
}
This is a building block indeed since~$\sum_{j=1}^{2^{dN}}|2^{n-N}C_j| = 1$. It is easy to see that
\eq{
\Big(\sum\limits_{Q\in \D_{N}}|Q|\big(\delta[\Psi;0.6\ell(Q)](c_Q)\big)^p\Big)^{q/p} \gtrsim 1.
}
Thus,~$\|\Psi\|_* \gtrsim 1$, and it remains to prove~\eqref{Concentration}. Set~$N=M$. We will be using the bounds obtained in Section~\ref{S2}. We will analyze the inequality~\eqref{Concentration} in the cases~$l\leq N-n$,~$l\in [N-n\twodots 2N-n]$,~$l\in [2N-n\twodots N]$, and~$l \geq N$ individually.

\paragraph{Case~$l \leq N-n$.} We refine the estimate~\eqref{eq14}, calculating the~$L_1$-norm directly:
\mlt{
\fint\limits_{1.2Q}\Big|\tilde{\Psi}(x) - \fint\limits_{1.2Q} \tilde{\Psi}(y)\,dy\Big|\,dx\\ \leq 2\fint\limits_{1.2Q} |\tilde{\Psi}(x)|\,dx \lesssim 2^{dk}\|\tilde{\Psi}\|_{L_1} \lesssim 2^{dk - \frac{p-1}{p}(n-N)},\qquad Q\in \D_k, k \leq 0.
}
Thus,~\eqref{eq15} is improved to
\eq{
\Big(\sum\limits_{Q\in \D_k} |Q|\big(\delta[\tilde{\Psi};0.6\ell(Q)](c_Q)\big)^p\Big)^{q/p} \lesssim 2^{\frac{dq(p-1)}{p}(k-n+N)},\qquad k \leq 0.
}
Therefore,
\eq{
\Big(\sum\limits_{Q\in \D_l}|Q|\big(\delta[\Psi;0.6\ell(Q)](c_Q)\big)^p\Big)^{q/p} \lesssim 2^{\frac{dq(p-1)}{p}l},\qquad l\leq N-n.
}
Note that this is bounded by~$2^{\frac12 dq(1-1/p)l}$, provided~$n \geq 2N$, and we have verified~\eqref{Concentration} in this case.

\paragraph{Case~$l\in [N-n\twodots 2N-n]$.} We use~\eqref{eq19}:
\mlt{
\Big(\sum\limits_{Q\in \D_l}|Q|\big(\delta[\Psi;0.6\ell(Q)](c_Q)\big)^p\Big)^{q/p}  = \Big(\sum\limits_{Q\in \D_{l-N+n}}|Q|\big(\delta[\tilde{\Psi};0.6\ell(Q)](c_Q)\big)^p\Big)^{q/p}\\
 \lesssim 2^{dqN}2^{-\frac{dq(p-1)}{p}n}\|\A^N\|_{\ell_\infty^{2^{dN}}}^q = 2^{\frac{dq(p-1)}{p}(N-n)}, \quad l\in [N-n\twodots 2N-n].
}
If~$2N < n$, then,
\eq{
\Big(\sum\limits_{Q\in \D_l}|Q|\big(\delta[\Psi;0.6\ell(Q)](c_Q)\big)^p\Big)^{q/p}  \lesssim 2^{\frac{dq(p-1)}{2p}(l-N)},\quad l\in [N-n\twodots 2N-n].
}

\paragraph{Case~$l\in [2N-n\twodots N]$.} We use~\eqref{IndScaleIntCubes}:
\mlt{
\Big(\sum\limits_{Q\in \D_l}|Q|\big(\delta[\Psi;0.6\ell(Q)](c_Q)\big)^p\Big)^{q/p} \\ = \Big(\sum\limits_{Q\in \D_{l-N+n}}|Q|\big(\delta[\tilde{\Psi};0.6\ell(Q)](c_Q)\big)^p\Big)^{q/p} \lesssim 2^{\frac{dpq}{p-1}(l - N)},\quad l\in [2N-n\twodots N]. 
}

\paragraph{Case~$l \geq N$.} The bounds \eqref{eq25} and~\eqref{eq26} imply
\eq{
\Big(\sum\limits_{Q\in \D_l}|Q|\big(\delta[\Psi;0.6\ell(Q)](c_Q)\big)^p\Big)^{q/p}  = \Big(\sum\limits_{Q\in \D_{l-N+n}}|Q|\big(\delta[\tilde{\Psi};0.6\ell(Q)](c_Q)\big)^p\Big)^{q/p} \lesssim 2^{-(l-N)q},\quad l \geq N.
}

We have finally verified~\eqref{Concentration} in all the cases and have completed the proof.
\end{proof}
\section{Case~$p=q$}\label{S4}
The material of this section is published with the permission of A. I. Tyulenev.
In the case~$p=q$ the~$B_{p,q}^{0,1}$ seminorm equals
\eq{
\Big(\int\limits_0^\infty \int\limits_{\R^d}\big(\delta_r [f;h](x)\big)^p\,dx\,\frac{dh}{h}\Big)^{\frac1p}.
}
Recall the sharp maximal function~$f^\#$ (see~\cite{Stein1993}) and note that
\eq{
\big(f^\#(x)\big)^p \lesssim \sup\limits_{h > 0} \big(\delta[f;h](x)\big)^p \lesssim \int\limits_0^{\infty}\big(\delta_r [f;h](x)\big)^p\frac{dh}{h}.
}
Therefore, if~$p > 1$, and~$f$ is a bounded compactly supported function, then
\eq{
\|f\|_{L_p} \lesssim \|f^{\#}\|_{L_p} \lesssim \|f\|_{B_{p,p}^{0,1}}.
}
This proves the embedding~$B_{p,p}^{0,1}\hookrightarrow L_p$ when~$p\in (1,\infty)$.

\bibliography{mybib}{}
\bibliographystyle{amsplain}

Dmitriy Stolyarov

St. Petersburg State University, 14th line 29B, Vasilyevsky Island, St. Petersburg, Russia.

d.m.stolyarov@spbu.ru.

\end{document}